\setlist[enumerate]{
  label=(\thethm.\arabic*),
  before={\setcounter{enumi}{\value{equation}}},
  after={\setcounter{equation}{\value{enumi}}},
  itemsep=1ex
}
\setlist[itemize]{
  leftmargin=*,
  topsep=1ex,
  itemsep=1ex,
  label=$\circ$
}
\newcommand{\mypagesize}{
  \addtolength{\textwidth}{20pt}
  \addtolength{\textheight}{14pt}
  \calclayout
}
\newtheorem*{thm-plain}{Theorem}
\newtheorem{thm}{Theorem}[section]
\newtheorem{lem}[thm]{Lemma}
\newtheorem{prp}[thm]{Proposition}
\newtheorem{cor}[thm]{Corollary}
\newtheorem{ques}[thm]{Question}
\numberwithin{equation}{thm}
\theoremstyle{definition}
\newtheorem{dfn}[thm]{Definition}
\newtheorem*{dfn-plain}{Definition}
\theoremstyle{remark}
\newtheorem{clm}[thm]{Claim}
\newtheorem{awlog}[thm]{Additional Assumption}
\newtheorem{setup}[thm]{Setup and Notation}
\newtheorem{rem}[thm]{Remark}
\newtheorem{exm}[thm]{Example}
\newtheorem*{rem-plain}{Remark}
\newcommand{\inv}{^{-1}}
\newcommand{\from}{\colon}
\newcommand{\lto}{\longrightarrow}
\newcommand{\x}{\times}
\newcommand{\inj}{\hookrightarrow}
\newcommand{\bij}{\xrightarrow{\,\smash{\raisebox{-.5ex}{\ensuremath{\scriptstyle\sim}}}\,}}
\newcommand{\isom}{\cong}
\newcommand{\defn}{\coloneqq}
\newcommand{\ndef}{\eqqcolon}
\newcommand{\tensor}{\otimes}
\newcommand{\wt}{\widetilde}
\renewcommand{\d}{\mathrm d}
\newcommand{\dual}{^{\smash{\scalebox{.7}[1.4]{\rotatebox{90}{\textup\guilsinglleft}}}}}
\newcommand{\ddual}{^{\smash{\scalebox{.7}[1.4]{\rotatebox{90}{\textup\guilsinglleft} \hspace{-.5em} \rotatebox{90}{\textup\guilsinglleft}}}}}
\newcommand{\factor}[2]{\left. \raise 2pt\hbox{$#1$} \right/\hskip -2pt \raise -2pt\hbox{$#2$}}
\DeclareMathOperator{\rk}{rk}
\newcommand{\lref}{\labelcref}
\renewcommand*{\qed}[1][\textsf{Easy.}]{%
\leavevmode\unskip\penalty9999 \hbox{}\nobreak\hfill
    \quad\hbox{#1}%
}
\newcommand{\set}[1]{\left\{ #1 \right\}}
\def\rd#1.{\lfloor{#1}\rfloor}
\def\rp#1.{\lceil{#1}\rceil}
\def\tw#1.{\langle{#1}\rangle}
\renewcommand{\O}[1]{\mathscr{O}_{#1}}
\newcommand{\Omegap}[2]{\Omega_{#1}^{#2}}
\newcommand{\Omegat}[2]{\check\Omega_{#1}^{#2}}
\newcommand{\Omegar}[2]{\Omega_{#1}^{[#2]}}
\newcommand{\T}[1]{\mathscr{T}_{#1}}
\newcommand{\can}[1]{\omega_{#1}}
\newcommand{\reg}[1]{{#1}_{\mathrm{reg}}}
\newcommand{\sing}[1]{{#1}_{\mathrm{sg}}}
\newcommand{\codim}[2]{\mathrm{codim}_{#1}(#2)}
\newcommand{\cc}[2]{\mathrm{c}_{#1}(#2)}
\def\Hnought#1.#2.{\mathit{\Gamma} \!\left( #1, #2 \right)}
\def\HH#1.#2.#3.{\mathrm{H}^{#1} \!\left( #2, #3 \right)}
\def\hh#1.#2.#3.{h^{#1} \!\left( #2, #3 \right)}
\def\RR#1.#2.#3.{R^{#1} #2_* #3}
\def\HHc#1.#2.#3.{\mathrm{H}_{\mathrm{c}}^{#1} \!\left( #2, #3 \right)}
\def\Hh#1.#2.#3.{\mathrm{H}_{#1} \!\left( #2, #3 \right)}
\def\Hom#1.#2.{\mathrm{Hom} \!\left( #1, #2 \right)}
\def\End#1.{\mathrm{End} \!\left( #1 \right)}
\def\sHom#1.#2.{\mathscr{H}\!om \!\left( #1, #2 \right)}
\def\Ext#1.#2.#3.{\mathrm{Ext}^{#1} \!\left( #2, #3 \right)}
\def\sExt#1.#2.#3.{\mathscr{E}\!xt^{#1} \!\left( #2, #3 \right)}
\def\Link#1.#2.{\mathrm{Link} \!\left( #1, #2 \right)}
\newcommand{\piet}[1]{\hat\pi_1(#1)}
\newcommand{\GL}[2]{\mathrm{GL}(#1, #2)}
\newcommand{\kahler}{K{\"{a}}hler\xspace}
\newcommand{\cy}{Calabi--Yau\xspace}
\newcommand{\lt}{locally trivial\xspace}
\newcommand{\gkp}{maximally quasi-\'etale\xspace}
\newcommand{\qe}{quasi-\'etale\xspace}
\newcommand{\lz}{Lipman--Zariski Conjecture\xspace}
\DeclareMathOperator{\tor}{tor}
\DeclareMathOperator{\Alb}{Alb}
\DeclareMathOperator{\Deflt}{Def^{lt}}
\renewcommand{\theta}{\vartheta}
\renewcommand{\phi}{\varphi}
\newcommand{\N}{\ensuremath{\mathbb N}}
\newcommand{\R}{\ensuremath{\mathbb R}}
\newcommand{\C}{\ensuremath{\mathbb C}}
\renewcommand{\frm}{\mathfrak m}
\newcommand{\frX}{\mathfrak X} 
 \newcommand{\sE}{\mathscr E} \newcommand{\sF}{\mathscr F}
\definecolor{forrest}{RGB}{81,133,49}
\definecolor{mydarkblue}{RGB}{10,92,153}
\title[A decomposition theorem for singular K{\"{A}}hler spaces]{A decomposition theorem for singular K{\"{A}}hler \\ spaces with trivial first Chern class \\ of dimension at most four}
\dedicatory{\raggedright
\hspace{.084\linewidth} Nemo: \\
\hspace{.084\linewidth} Ad litteram nemo: \\
\hspace{.084\linewidth} Ne anima quidem ulla: \\
\hspace{.084\linewidth} Graf: Cum non inutile sit eandem veritatem per methodos diversas perscrutari \dots}
\author{Patrick Graf}
\address{Fachbereich 08, Johannes-Gutenberg-Universit\"at Mainz, Staudingerweg 9, 55099 Mainz, Germany}
\email{\href{mailto:patrick.graf@uni-bayreuth.de}{patrick.graf@uni-bayreuth.de}}
\urladdr{\href{http://www.pgraf.uni-bayreuth.de/en/}{www.graficland.de}}
\date{January 17, 2021}
\keywords{\kahler spaces, klt singularities, vanishing first Chern class, unobstructed deformations, decomposition theorem}
\subjclass[2010]{32J27, 14E30, 14J32}
\begin{document}

\begin{abstract}
Let $X$ be a compact \kahler fourfold with klt singularities and vanishing first Chern class, smooth in codimension two.
We show that $X$ admits a Beauville--Bogomolov decomposition: a finite quasi-\'etale cover of $X$ splits as a product of a complex torus and singular Calabi--Yau and irreducible holomorphic symplectic varieties.
We also prove that $X$ has small projective deformations and the fundamental group of $X$ is projective.
To obtain these results, we propose and study a new version of the Lipman--Zariski conjecture.
\end{abstract}

\maketitle


\section{Introduction}

Let $X$ be a compact \kahler manifold such that $\cc1X = 0 \in \HH2.X.\R.$.
The structure of such $X$ is in many aspects well-understood~\cite{Beauville83, Bogomolov78, Tian87, Todorov89}:
\begin{itemize}
\item Geometry: a finite \'etale cover of $X$ splits as a product of a complex torus, simply connected \cy manifolds and irreducible holomorphic symplectic manifolds (Beauville--Bogomolov decomposition).
\item Deformation theory: The local deformation space of $X$ is smooth (Bogomolov--Tian--Todorov theorem), and $X$ admits small projective deformations (Kodaira problem).
\item Topology: the fundamental group $\pi_1(X)$ is projective, virtually abelian, and finite if the augmented irregularity $\wt q(X)$ of $X$ vanishes.
\end{itemize}

The BB (= Beauville--Bogomolov) decomposition is a cornerstone in the classification of compact \kahler manifolds up to biholomorphic maps.
The subject of birational geometry, however, is rather a structure theory up to \emph{bimeromorphic} maps.
As is well-known, in this context it is necessary to consider singular analogues of the above manifolds.
By this, we mean compact \kahler spaces with klt singularities and $\cc1X = 0$.
In the projective case, the BB decomposition has been established by~\cite{Druel18, GGK19, HoeringPeternell19}.
This result is commonly referred to as the BBDGGHKP decomposition.
But most of the other properties listed in the beginning remain elusive even for projective varieties.

Very recently, the decomposition theorem has been extended to the \kahler case~\cite{Kodairaflat, BochnerCGGN, BakkerGuenanciaLehn20}.
The final statement may therefore be called the
\[ \text{BBBCDGGGHKLNPS decomposition}. \]
The goal of this paper is to give an independent proof of the decomposition in dimension four.
Our principal result, however, is a generalization of the BTT (= Bogomolov--Tian--Todorov) theorem.

\begin{thm}[Singular BTT theorem in dimension four] \label{btt}
Let $X$ be a normal compact \kahler space of dimension $\le 4$, with klt singularities and such that $\cc1X = 0 \in \HH2.X.\R.$.
Assume that $\dim \sing X \le 1$.
Then the semiuniversal \lt deformation space $\Deflt(X)$ is smooth, unless possibly in the case $\wt q(X) = \dim X - 3$.
\end{thm}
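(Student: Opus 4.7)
The plan is to apply Kawamata's $T^1$-lifting criterion for locally trivial deformations: writing $n = \dim X$, $\Deflt(X)$ is smooth iff every deformation of $X$ over $\Spec \C[t]/(t^{k+1})$ extends to $\Spec \C[t]/(t^{k+2})$ for all $k$, with the obstruction class living in $\HH 2.X.{\mathcal{T}}.$, where $\mathcal{T}$ denotes the sheaf governing locally trivial deformations. In the key case $n = 4$, the hypothesis $\dim \sing X \le 1$ gives $\codim{X}{\sing X} \ge 3$, and the Lipman--Zariski-type theorem established earlier in the paper should identify $\mathcal{T}$ with the reflexive tangent sheaf $(\Omegar{X}{1})\dual$. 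Passing to a finite \qe cover, which does not affect smoothness of $\Deflt$, one may assume that a nowhere-vanishing reflexive top form $\sigma \in \Hnought X.{\Omegar{X}{n}}.$ exists (using $\cc{1}{X} = 0$); contraction with $\sigma$ yields $\mathcal{T} \isom \Omegar{X}{n-1}$, and the obstruction map becomes the cup product on $\HH 1.X.{\Omegar{X}{n-1}}.$ with values in $\HH 2.X.{\Omegar{X}{n-1}}.$.

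\textbf{Hodge-theoretic core.} The main new input is a reflexive Hodge-symmetry/degeneration statement for $X$ in the K\"ahler singular setting. Passing to a log-resolution $\wt X \to X$ and using K\"ahler Hodge theory on $\wt X$, combined with the extension theorem for reflexive differentials and the codimension-$3$ hypothesis (which guarantees that $\Omegar{X}{p}$ agrees with the pushforward of $\Omegap{\wt X}{p}$ in the relevant range), one establishes an injection $\HH 2.X.{\Omegar{X}{n-1}}. \inj \HH n+1.X.\C.$ coming from the Hodge filtration. A singular version of the Tian--Todorov identity then shows that the cup-product image of Kodaira--Spencer classes becomes exact after contraction with $\sigma$, and is therefore zero in the obstruction space.

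\textbf{Reduction and the exception.} For $\wt q(X) > 0$, the argument on $X$ is replaced by a reduction: after a further \qe cover, one splits off a complex torus factor $T$ of dimension $\wt q(X)$, yielding $\Deflt(X) \isom \Deflt(T) \x \Deflt(Y)$ for a factor $Y$ of dimension $n - \wt q(X)$. The torus factor is unobstructed, and $Y$ is then handled either by induction on dimension or by the main Hodge-theoretic step above. The exception case $\wt q(X) = n - 3$ is exactly the one in which this reduction produces a three-dimensional strict singular Calabi--Yau factor $Y$: in dimension $3$ the codimension bound on the singular locus is only $\ge 2$, below the threshold required by the reflexive Hodge-theoretic input, and the three-dimensional instance of the theorem itself carries the analogous exception at $\wt q = 0$, so the induction does not close.

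\textbf{Main obstacle.} The hardest step is the reflexive Hodge theory for $\Omegar{X}{n-1}$ in the non-projective K\"ahler singular setting: classical projective techniques relying on a polarization or Hard Lefschetz are unavailable, so one must combine the new Lipman--Zariski variant developed in the paper with a careful descent from a resolution, using the codimension-$3$ hypothesis essentially at every stage. Its failure in codimension~$2$ is precisely what forces the exception $\wt q(X) = n - 3$.
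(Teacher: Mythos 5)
Your proposal takes a genuinely different route from the paper, and the route has a gap I would consider fatal as written. The entire weight of your argument rests on the ``Hodge-theoretic core'': a degeneration/symmetry statement for reflexive differentials on a singular \kahler klt space, plus a singular Tian--Todorov identity, giving vanishing of the obstruction after contraction with a trivializing reflexive top form. No such package is established in this paper or, as far as I know, anywhere else at this level of generality --- the unavailability of precisely this input is why the singular BTT theorem remains open even for \emph{projective} klt varieties with trivial canonical class (e.g.\ strict Calabi--Yau threefolds), and is why \cref{btt} carries the exception $\wt q(X) = \dim X - 3$ in the first place. You cannot invoke it as a known step; it would be the main theorem. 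Relatedly, the Lipman--Zariski variant (\cref{directlz}) does not ``identify the sheaf governing locally trivial deformations with the reflexive tangent sheaf'': the identification $T_0\Deflt(X) = \mathrm{H}^1(X, \T X)$ is standard and independent of \cref{directlz}, whose actual role is entirely different.

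What the paper does is avoid Hodge theory on $X$ altogether. After splitting off the torus factor via the torus cover (\cref{tcr}, \cref{def tc}) --- this reduction does match yours --- one is left with a factor $Z$ of dimension $1$, $2$ or $4$ with $\wt q(Z) = 0$; dimensions $\le 2$ give quotient singularities, where smoothness of $\Deflt$ is known. In dimension $4$ the decisive point is \cref{main bound}: the hypothesis $\dim \sing X \le 1$ feeds into the new Lipman--Zariski statement to force the flat factor $\sF$ of the holonomy decomposition of $\T Z$ to vanish, after which the paper argues that the holonomy decomposition is built from $\mathrm{Sp}(2)$ and $\mathrm{SU}(2) = \mathrm{Sp}(1)$, so $Z$ carries a holomorphic symplectic form and smoothness of $\Deflt(Z)$ is quoted from Bakker--Lehn's unobstructedness theorem for symplectic varieties. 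This also corrects your account of the exceptional case: when $\wt q(X) = \dim X - 3$ the complementary factor $Z$ is a strict Calabi--Yau threefold (holonomy $\mathrm{SU}(3)$), which is projective but carries no symplectic form, so the Bakker--Lehn input is unavailable; it is not a failure of a codimension bound, since $\dim \sing Z \le 1$ still gives codimension $\ge 2$ there and \cref{main bound} still applies.
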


In all cases where \cref{btt} does not apply, $X$ is already projective.
This allows us to draw the following consequences.

\begin{cor}[Kodaira problem in dimension four] \label{kod}
Let $X$ be as in \cref{btt}.
Then the semiuniversal family $\frX \to \Deflt(X)$ is a strong \lt algebraic approximation.
In particular, $X$ is locally algebraic.
\end{cor}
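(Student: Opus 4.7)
The plan is to split on whether \cref{btt} applies. If $\wt q(X) = \dim X - 3$, the parenthetical remark immediately after \cref{btt} already says that $X$ is projective. In that case the \lt deformation functor agrees with its algebraic counterpart, so the semiuniversal family $\frX \to \Deflt(X)$ is a flat algebraic family, every fibre is projective, and both the strong \lt algebraic approximation statement and local algebraicity are immediate. No further work is needed here.

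In every other case \cref{btt} yields smoothness of $\Deflt(X)$, and this is the one structural input that is really needed. The corollary then reduces to the classical density-of-projective-fibres question, and the strategy is to run Kodaira's Hodge-theoretic template. Consider the variation of pure Hodge structure on the relative reflexive $\HH2.\frX_t.\R.$ over a small neighbourhood of $0 \in \Deflt(X)$: smoothness of the base lets one spread a chosen \kahler class $\alpha_0$ on $X$ to a nearby class $\alpha_t$, which stays \kahler by openness of the relative \kahler cone. A rational class arbitrarily close to $\alpha_t$ that remains of Hodge type $(1,1)$ on some $\frX_{t'}$ is then produced by the standard density argument: for each rational $\beta \in \HH2.X.\Q.$ the locus $\{t : \beta \in H^{1,1}(\frX_t)\}$ is a real analytic subvariety of $\Deflt(X)$, and their union is dense in any open set meeting the $(1,1)$-cone. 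A singular Kodaira embedding theorem -- a klt compact \kahler space with a rational \kahler class is projective -- then identifies $\frX_{t'}$ as algebraic, yielding a sequence of projective fibres $\frX_{t_n} \to X$.

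The main obstacle is purely Hodge-theoretic: one must ensure that the reflexive $\HH2.\frX_t.\R.$ carries a pure Hodge structure varying holomorphically over $\Deflt(X)$ in the singular \kahler \lt setting, which is delicate given only $\dim \sing X \le 1$ rather than smoothness in codimension two, and one must invoke a singular Kodaira embedding under klt hypotheses. Both ingredients ought to be available from the recent literature on singular \kahler geometry underlying the BBBCDGGGHKLNPS decomposition, but checking compatibility with \emph{locally trivial} deformations (as opposed to all deformations) is the subtle point. Once the Hodge-theoretic machinery is in place, the density step and the conclusion are routine, and both the strong \lt algebraic approximation and local algebraicity of $X$ follow at once.
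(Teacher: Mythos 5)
Your case split matches the paper's, but the first branch contains a genuine error. In the excluded case $\wt q(X) = \dim X - 3$ you argue that since $X$ is projective, ``the \lt deformation functor agrees with its algebraic counterpart'' and hence every fibre of $\frX \to \Deflt(X)$ is projective. This principle is false: projectivity of the central fibre does not propagate to nearby fibres of a \lt (or any) deformation --- a projective K3 surface or an abelian variety already has generically non-algebraic small deformations. What actually makes this case work, and what the paper proves, is the vanishing $\HH2.X.\O X. = 0$: writing a torus cover $T \x Z \to X$ with $\dim Z = 3$ and $q(Z) = 0$, Serre duality gives $\hh2.Z.\O Z. = 0$, K\"unneth gives $\HH2.T \x Z.\O{T \x Z}. = 0$, hence $\HH2.X.\O X. = 0$; this Hodge number is constant in \lt families (simultaneous resolution plus rationality of the singularities), so \emph{every} fibre $X_t$ satisfies $\HH2.X_t.\O{X_t}. = 0$ and is projective by the Kodaira embedding theorem. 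Without some such cohomological input your conclusion in this branch is unsupported.

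In the main branch (where \cref{btt} applies) the paper does not re-run Kodaira's Hodge-theoretic template: it simply invokes \cite[Thm.~1.2]{Kodairaflat}, which says precisely that smoothness of $\Deflt(X)$ yields a strong \lt algebraic approximation for klt \kahler spaces with $\cc1X = 0$. Your sketch is a reconstruction of (part of) that cited theorem, and it glosses over the two points that make it nontrivial: the construction of a pure, holomorphically varying Hodge structure on $\mathrm{H}^2$ of the singular fibres of a \lt family, and --- more seriously --- the density of the union of the Noether--Lefschetz-type loci $\{t : \beta \in H^{1,1}(\frX_t)\}$, which is \emph{not} automatic from real-analyticity and requires a Green--Voisin-style surjectivity criterion (this is where $\cc1X = 0$ and unobstructedness enter in an essential way). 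Deferring both to ``the recent literature'' is exactly deferring to the theorem the paper cites, so this branch is acceptable in spirit but should be replaced by the citation or by an actual verification of those two ingredients.
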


This puts~\cite[Thm.~H]{BochnerCGGN} in its final form:
we remove both the local algebraicity assumption and the necessity to take a \qe cover before obtaining an algebraic approximation of $X$.
Note also that \cref{kod} confirms~\cite[Conj.~K]{BochnerCGGN} in this particular case.

We now turn to fundamental groups.
In~\cite[Thm.~G]{BochnerCGGN}, we showed that in dimension four $\pi_1(X)$ is virtually abelian.
In particular, this group is ``virtually projective'', i.e.~it contains a normal subgroup of finite index which is isomorphic to $\pi_1(Y)$ for some projective manifold $Y$.
\cref{pi1} below shows that passing to a subgroup is in fact not necessary.
It also implies that $\pi_1$ of any \kahler fourfold of Kodaira dimension zero admitting a good minimal model is projective.
This gives some new evidence towards the conjecture that every \kahler group is projective~\cite[(1.26)]{ABCKT96}.

Campana's Abelianity Conjecture~\cite[Conj.~7.3]{Cam04} also makes similar predictions for the fundamental group of the smooth locus $\pi_1(\reg X)$, but this is currently out of reach even for $X$ projective.
Using \cref{kod}, we can at least confirm the conjecture for the profinite completion $\piet{\reg X}$.

\begin{cor}[Fundamental groups] \label{pi1}
Let $X$ be as in \cref{btt}.
Then:
\begin{enumerate}
\item\label{pi1.1} The fundamental group $\pi_1(X)$ is projective.
\item\label{pi1.2} The algebraic fundamental group of the smooth locus $\piet{\reg X}$ is virtually abel\-ian, and finite if $\wt q(X) = 0$.
\end{enumerate}
\end{cor}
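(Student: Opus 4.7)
The plan is to reduce both assertions to their projective counterparts by invoking \cref{kod} to replace $X$ with a nearby projective fibre in its \lt deformation, and then applying the projective Beauville--Bogomolov decomposition of Druel--Greb--Guenancia--Kebekus--H{\"o}ring--Peternell. If \cref{btt} does not apply, then as already noted $X$ is projective and I set $X_t \defn X$. Otherwise \cref{kod} yields a strong \lt algebraic approximation $\frX \to \Deflt(X)$, and arbitrarily close to the origin there is a point $t$ with $X_t$ projective. Because the family is \lt, after shrinking to a small contractible neighbourhood of $0$ it becomes a stratum-preserving topological product; in particular $X$ and $X_t$ are homeomorphic, $\reg X$ and $\reg{X_t}$ are diffeomorphic, and one obtains canonical identifications
\[
\pi_1(X) \isom \pi_1(X_t), \qquad \piet{\reg X} \isom \piet{\reg{X_t}}, \qquad \wt q(X) = \wt q(X_t).
\]
It is therefore enough to prove both statements for the projective variety $X_t$.

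For \lref{pi1.1}, I choose a log resolution $\pi \from \tilde X_t \to X_t$. Because klt singularities are rational, Takayama's theorem on simple-connectedness of resolutions of klt singularities yields that $\pi_\ast \from \pi_1(\tilde X_t) \to \pi_1(X_t)$ is an isomorphism. As $\tilde X_t$ is smooth projective, $\pi_1(X) \isom \pi_1(\tilde X_t)$ is a projective group.

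For \lref{pi1.2}, I apply the projective Beauville--Bogomolov decomposition to $X_t$ and then pass to a \gkp cover $\hat X_t \to X_t$, obtaining a splitting
\[
\hat X_t \isom T \x \prod_i Y_i \x \prod_j Z_j
\]
with $T$ an abelian variety, the $Y_i$ singular \cy and the $Z_j$ irreducible holomorphic symplectic, and with the additional property that $\piet{\reg{\hat X_t}} = \piet{\hat X_t}$. By the K\"unneth formula for algebraic fundamental groups and the definitional vanishing of $\piet{\cdot}$ on the \cy and IHS factors of a \gkp cover,
\[
\piet{\hat X_t} \isom \piet{T} \isom \hat\Z^{\,2 \dim T}.
\]
This is abelian and sits as a finite-index subgroup of $\piet{\reg{X_t}}$, proving virtual abelianity. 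If $\wt q(X) = 0$ then $\dim T = 0$, so $\piet{\reg{\hat X_t}}$ is trivial and $\piet{\reg{X_t}}$ is finite.

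The principal subtlety is the invariance $\piet{\reg X} \isom \piet{\reg{X_t}}$ under the \lt algebraic approximation: one must verify that $\reg \frX \to \Deflt(X)$ carries a $C^\infty$ fibre-bundle structure over a small contractible neighbourhood of the origin. This is a standard consequence of \lt-ness (each germ of $X$ deforms as a product with the base), but needs some care in patching the local trivialisations along the positive-dimensional singular locus. With that in place, the corollary becomes a formal combination of \cref{kod}, Takayama's theorem on rational singularities, and the projective Beauville--Bogomolov decomposition.
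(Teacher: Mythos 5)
Your treatment of \lref{pi1.1} is essentially the paper's own argument: the paper likewise passes to a nearby projective fibre, uses Thom's First Isotopy Lemma to get a homeomorphism $X \cong X_t$, and defers to the proof of \cite[Cor.~1.8]{AlgApprox} for the remaining step (Takayama's theorem identifying $\pi_1$ of a klt variety with $\pi_1$ of its resolution). That part is fine.

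For \lref{pi1.2} you take a genuinely different, and considerably longer, route, and it has concrete gaps. First, the assertion that $\piet{\cdot}$ of the \cy and IHS factors vanishes ``by definition'' is wrong: the definition in \cite[Def.~6.11]{BochnerCGGN} only imposes $\wt q = 0$ together with conditions on reflexive forms/holonomy; what is true is that $\piet{\reg{\cdot}}$ of such factors is \emph{finite}, and this is a theorem of \cite{GGK19}, not a definition. Finiteness does suffice for your Künneth argument, so this is repairable, but the input must be named correctly. Second, your reduction requires two invariance statements along the \lt deformation: $\piet{\reg X} \cong \piet{\reg{X_t}}$ and $\wt q(X) = \wt q(X_t)$. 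The first needs the isotopy-lemma trivialization to be stratum-preserving so that it restricts to the regular loci --- you acknowledge but do not supply this. The second is not a formal consequence of any homeomorphism: $\wt q$ is a supremum of $h^1$ of the structure sheaf over resolutions of all \qe covers, and its constancy in \lt families requires an actual argument (e.g.\ that \qe covers deform along in \lt families, plus rational singularities and Hodge theory to control $q$). The paper sidesteps the entire projective detour: by \cref{kod} the space $X$ is locally algebraic, hence admits a \gkp cover $\gamma \from Y \to X$ by \cite[Prop.~5.9]{BochnerCGGN}; then $\piet{\reg Y} \to \piet Y$ is an isomorphism by~\lref{gkp.3}, the group $\piet{\reg Y}$ sits in $\piet{\reg X}$ with finite index, and \cite[Thm.~G]{BochnerCGGN} applied to $Y$ --- already a K\"ahler statement, no projectivity needed --- gives virtual abelianity and finiteness when $\wt q(X) = 0$. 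I recommend adopting that shortcut; if you keep the projective detour, you must prove the two invariance claims and replace ``definitional vanishing'' by the finiteness theorem of \cite{GGK19}.
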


Finally, we have the decomposition theorem mentioned in the title.
For the definition of singular CY and IHS varieties, we refer to~\cite[Def.~6.11]{BochnerCGGN}.

\begin{cor}[BB decomposition] \label{bb}
Let $X$ be as in \cref{btt}.
Then some \qe cover of $X$ splits as a product of a complex torus, \cy and irreducible holomorphic symplectic varieties.
\end{cor}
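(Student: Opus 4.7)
The plan is to split into two cases according to whether \cref{btt} applies. If $\wt q(X) = \dim X - 3$, the regime in which \cref{btt} possibly fails, then the remark following that theorem says $X$ is already projective, and the desired decomposition is furnished directly by the projective BBDGGHKP theorem~\cite{Druel18, GGK19, HoeringPeternell19}. So the substantial task is to treat the complementary case, where $\Deflt(X)$ is smooth.

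In that case \cref{kod} provides a strong \lt algebraic approximation: the semiuniversal family $\pi \from \frX \to \Deflt(X)$ is \lt, its central fibre is $X$, and a dense set of nearby fibres $X_t$ are projective. Each such $X_t$ is again a normal compact \kahler fourfold with klt singularities, vanishing first Chern class, and smooth in codimension two, these three properties being preserved under \lt deformations. Applying the projective BBDGGHKP decomposition to a projective $X_t$ yields a finite \qe cover $\wt X_t \to X_t$ that splits as a product of a complex torus, singular \cy and irreducible holomorphic symplectic factors.

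The key step is then to transport this decomposition back to the central fibre. Because $\pi$ is \lt, the smooth loci $\reg{\frX_t}$ assemble into a topological fibre bundle over $\Deflt(X)$, and so $\piet{\reg{X_t}}$ is independent of $t$ and in particular isomorphic to $\piet{\reg X}$. The finite index subgroup of $\piet{\reg{X_t}}$ corresponding to $\wt X_t \to X_t$ therefore determines a finite \qe cover of the entire family $\wt\frX \to \frX$, whose central fibre provides a \qe cover $\wt X \to X$, and whose other fibres recover the $\wt X_t$. The splitting of $\wt X$ into torus, \cy and IHS pieces can then be extracted from deformation-invariant data on $\wt\frX$: the torus factor appears as the target of the (singular) Albanese map of $\wt X$, while the remaining \cy and IHS factors are distinguished by the reflexive holomorphic forms on $\wt X$, whose dimensions $h^{0,p}$ are locally constant in the \lt family and therefore match those of the projective fibres.

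The principal obstacle is to make this last step rigorous in the singular \kahler setting. Concretely, one must verify that (i) the singular Albanese morphism behaves well in \lt families of klt spaces, so that a split off torus on a general fibre propagates to the central fibre, and (ii) the holonomy-type dichotomy between \cy and IHS factors is stable under \kahler degeneration, so that the product structure on $\wt X_t$ actually specialises to a product structure on $\wt X$ rather than to some weaker cohomological shadow. Both ingredients are essentially available in~\cite{BochnerCGGN} and~\cite{BakkerGuenanciaLehn20}; the real work is in combining them so that a product decomposition of a single projective fibre genuinely spreads out across the family to the central K\"{a}hler fibre.
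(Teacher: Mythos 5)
There is a genuine gap, and it sits exactly where you flag ``the real work'': transporting the product decomposition from a nearby projective fibre $X_t$ back to the central K\"ahler fibre $X$. Showing that a Beauville--Bogomolov splitting of the general fibre of a (locally trivial) family specialises to a splitting of the special fibre is not a routine continuity argument -- it is essentially the main content of~\cite{BakkerGuenanciaLehn20}, a paper whose results this article deliberately avoids invoking in order to keep the four-dimensional proof logically independent. Your sketch of how to do it (spreading out the \qe cover via constancy of $\piet{\reg{X_t}}$, then reading off the torus factor from the Albanese and the CY/IHS factors from reflexive Hodge numbers) identifies deformation-invariant \emph{numerical} data, but constancy of $h^{0,p}$ and of the Albanese dimension does not by itself produce a biholomorphic product structure on the central fibre; you acknowledge this yourself when you distinguish a genuine product from a ``weaker cohomological shadow''. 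As written, the proposal defers precisely the step that needs proving.

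The paper takes a much shorter route that bypasses degeneration of the decomposition entirely. \cref{kod} is not used to produce a projective fibre to decompose; it is used for its stated consequence that $X$ \emph{itself is locally algebraic}. The decomposition theorem of~\cite[Thm.~H]{BochnerCGGN} already establishes the torus--CY--IHS splitting of a \qe cover for locally algebraic compact K\"ahler klt spaces with $\cc1X = 0$, so once local algebraicity of $X$ is in hand, \cref{bb} follows by direct citation -- no family, no specialisation, and no case distinction on $\wt q(X)$ is needed (note that \cref{kod} covers the case $\wt q(X) = \dim X - 3$ as well, since there $X$ is outright projective). If you want to salvage your approach, you would have to either prove the specialisation statement (i)--(ii) in full, which amounts to redoing~\cite{BakkerGuenanciaLehn20} in this setting, or recognise that the local algebraicity output of \cref{kod} is the hypothesis that unlocks the already-proved~\cite[Thm.~H]{BochnerCGGN}.
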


\subsection*{The flat factor of $\T X$}

Let us briefly comment on the proof of \cref{btt}.
It relies on an analysis of the flat factor $\sF$ in the holonomy decomposition of $\T X$.
To explain this, and also to fix notation, recall that in~\cite[Thm.~C]{BochnerCGGN} we proved in particular the following.

\begin{setup}[Standard setting] \label{std}
Let $X$ be a normal compact \kahler space with klt singularities such that $\cc1X = 0 \in \HH2.X.\R.$.
Then after replacing $X$ by a finite \qe cover, the so-called \emph{holonomy cover}, the tangent sheaf of $X$ decomposes as
\[ \T X = \sF \oplus \bigoplus_{k \in K} \sE_k, \]
where the sheaves $\sF$ and $\sE_k$ satisfy the following:
\begin{itemize}
\item The restriction $\sF\big|_{\reg X}$ is flat, i.e.~given by a representation $\pi_1(\reg X) \to \mathrm{SU}(r)$, where $r = \rk(\sF)$.
\item Each summand $\sE_k\big|_{\reg X}$ has full holonomy group either $\mathrm{SU}(n_k)$ or $\mathrm{Sp}(n_k/2)$, with respect to a suitable singular Ricci-flat metric.
Here $n_k = \rk(\sE_k) \ge 2$.
\end{itemize}
\end{setup}

The natural conjecture concerning the flat factor $\sF$ is that it corresponds to the torus factor in the (conjectural) Beauville--Bogomolov decomposition of $X$.
It is convenient to rephrase this in different but equivalent ways:
\begin{itemize}
\item The rank of $\sF$ should equal the augmented irregularity of $X$, i.e.~$r = \wt q(X)$.
\item If $\wt q(X)$ vanishes, then $\sF$ should be the zero sheaf.
\end{itemize}
The conjecture follows from~\cite{BakkerGuenanciaLehn20}, but for the purpose of giving a logically independent proof of \cref{btt}, the following partial result is key.
It bounds the rank of $\sF$ from above in terms of the dimension of the singular locus of $X$ and in fact holds in arbitrary dimension.

\begin{thm}[Bounding the flat factor] \label{main bound}
In the standard setting~\lref{std}, assume that the augmented irregularity of $X$ vanishes, $\wt q(X) = 0$.
Then:
\begin{enumerate}
\item\label{mb.1} The flat factor $\sF$ satisfies $\rk(\sF) \le \dim \sing X$.
\item\label{mb.2} If $\dim \sing X \le 1$, then $\sF = 0$.
\end{enumerate}
\end{thm}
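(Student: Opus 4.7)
The plan is to reduce the theorem to a singular Bochner principle: for a klt compact K\"ahler space $X$ with $\cc1X = 0$, the identity component $\Autn(X)$ is a complex torus of dimension at most $\tilde q(X)$, so the assumption $\tilde q(X) = 0$ forces $H^0(X, \T X) = 0$. Consequently, any nonzero global section of $\sF \subseteq \T X$ would already yield the desired contradiction, and the whole argument boils down to producing such a section in favorable cases, or bounding the rank of $\sF$ by $\dim \sing X$ in general.

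Part \lref{mb.2} will follow from part \lref{mb.1} together with a direct argument ruling out the possibility $r = 1$. When $\rk(\sF) = 1$ the monodromy representation $\rho \from \pi_1(\reg X) \to \mathrm{SU}(1) = \{\id\}$ is automatically trivial, so $\sF|_{\reg X} \isom \O{\reg X}$. By reflexivity of $\sF$ and the normality of $X$, the generating section extends uniquely across $\sing X$ to a nonzero $v \in H^0(X, \sF) \subset H^0(X, \T X)$, contradicting the Bochner principle above. Combined with the bound $\rk(\sF) \le \dim \sing X \le 1$ from part \lref{mb.1}, this yields part \lref{mb.2}.

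For $r \ge 2$ in part \lref{mb.1}, the direct trivialization approach fails, because the image of $\rho$ in $\mathrm{SU}(r)$ may be infinite and therefore cannot be killed by any further quasi-\'etale cover (recall that finite \'etale covers of $\reg X$ extend to quasi-\'etale covers of $X$ by Grauert-Remmert). Instead, the strategy is to apply the new version of the Lipman-Zariski conjecture developed earlier in the paper. Its role is to translate the analytic flatness of $\sF|_{\reg X}$ into an algebro-geometric constraint on the dimension of $\sing X$: roughly, if $\dim \sing X < r$, the flat $\mathrm{SU}(r)$-connection on $\sF|_{\reg X}$ would be forced to extend across $\sing X$, which would in turn reduce the monodromy to a finite group after a quasi-\'etale cover and hence to the argument of part \lref{mb.2}, yielding the required contradiction.

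The main obstacle is precisely this Lipman-Zariski-type input for part \lref{mb.1}. Bridging the differential-geometric flatness of $\sF|_{\reg X}$ with a codimension estimate for $\sing X$ is where a genuinely new idea is needed. I expect the technical heart to involve a careful analysis of the Ricci-flat metric transversely to $\sing X$ on a resolution of singularities, together with a study of the behaviour of the monodromy around $\sing X$, so as to rule out the persistence of an infinite holonomy group when the singular locus is too low-dimensional.
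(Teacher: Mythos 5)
Your identification of the Lipman--Zariski variant as the relevant tool for~\lref{mb.1} is correct, and your rank-one argument for~\lref{mb.2} is essentially the paper's: the paper also trivializes $\sF\big|_{\reg X}$ via $\mathrm{SU}(1) = \{1\}$ and extends by reflexivity, though it then contradicts $\wt q(X) = 0$ through the dual inclusion $\sF\dual \subset \Omegar X1$ (a nonzero reflexive $1$-form forces $q(X) > 0$) rather than through $H^0(X, \T X) = 0$; both routes are valid, but your appeal to $\Autn(X)$ being a torus of dimension $\le \wt q(X)$ would need a citation or the short Hodge-theoretic computation $h^0(\T X) = h^0(\Omegar X{n-1}) = h^1(\O X) = q(X)$. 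The genuine gap is in~\lref{mb.1} for $r \ge 2$: you do not give an argument, you describe a speculative mechanism (extending the flat connection across $\sing X$, analyzing the Ricci-flat metric transversely to the singular locus, forcing the monodromy to become finite) and you explicitly concede that ``a genuinely new idea is needed.'' None of that is how the statement is actually proved, and as written your proposal does not establish~\lref{mb.1}.

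What you are missing is that \cref{directlz-cor} --- already proved in the paper as a consequence of \cref{directlz} --- gives exactly the needed conclusion in one step: if $\T X = \sE \oplus \sF$ with $\sF\big|_{\reg X}$ flat of rank $r$ and $\dim \sing X \le r - 1$, then $X$ has only \emph{quotient singularities}. So assume for contradiction that $\rk(\sF) \ge \dim \sing X + 1$. Then $X$ has quotient singularities, hence is locally algebraic, and for locally algebraic klt \kahler spaces with $\cc1X = 0$ it is already known from \cite[Thm.~C]{BochnerCGGN} that the rank of the flat holonomy factor equals the augmented irregularity. Since $\wt q(X) = 0$, this forces $\rk(\sF) = 0$, contradicting $\rk(\sF) \ge 1$. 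The decisive chain is therefore ``$\dim\sing X$ too small $\Rightarrow$ quotient singularities $\Rightarrow$ locally algebraic $\Rightarrow$ the already-established case of the conjecture applies,'' with no extension of the flat connection across $\sing X$ and no finiteness-of-monodromy argument required.
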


\begin{rem-plain}
In~\lref{mb.1}, we need to adopt the convention that the empty set has dimension zero (as opposed to $-1$ or $-\infty$) in order for the conclusion to hold also if $X$ is smooth.
\end{rem-plain}

\subsection*{The \lz for direct summands}

To prove \cref{main bound}, we propose and study a new variant of the \lz, which we explain now.
The classical \lz (which is still open) states that if the tangent sheaf $\T X$ of a complex algebraic variety or complex space is locally free, then $X$ is smooth.
Here we ask what happens if $\T X$ is not necessarily locally free, but contains a locally free direct summand.

\begin{ques}[\lz for direct summands] \label{directlz ques}
Let $X$ be a complex space.
Assume that the tangent sheaf of $X$ admits a direct sum decomposition
\[ \T X = \sE \oplus \sF, \]
where $\sF$ is locally free.
Under what assumptions on the rank of $\sF$ and on the singularities of $X$ can we conclude that $X$ is smooth?
\end{ques}

\cref{directlz ques} is formulated in a deliberately vague way.
In this work, we will concentrate on the case of klt singularities, but other classes of singularities would be equally interesting.

\begin{thm}[\cref{directlz ques} for klt singularities] \label{directlz}
Let $X$ be a normal complex space with klt singularities such that the tangent sheaf admits a direct sum decomposition
\[ \T X = \sE \oplus \sF, \]
where $\sF$ is locally free of rank $r$.
If $\dim \sing X \le r - 1$, then $X$ is smooth.
\end{thm}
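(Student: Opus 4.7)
The plan is to prove this by induction on $r$, with the trivial base case $r = 0$: the hypothesis reads $\dim \sing X \le -1$, i.e.\ $\sing X = \emptyset$, so $X$ is smooth. For the inductive step, assume $r \ge 1$ and, towards a contradiction, that $\sing X$ is nonempty. Pick a general smooth point $p$ of a top-dimensional irreducible component $S$ of $\sing X$, so that $\dim T_p S = \dim S \le r - 1$. The whole strategy is to exhibit $r$ linearly independent vectors of $T_p X$ that are forced to lie inside the $(\le r - 1)$-dimensional subspace $T_p S$, which is the desired contradiction.

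These vectors will come from the generators of $\sF$. Choose a local $\O{X}$-basis $v_1, \dots, v_r$ of $\sF$ near $p$; each $v_i$ is a holomorphic vector field, so by the standard integration theorem for holomorphic vector fields on reduced complex spaces it generates a local one-parameter family of biholomorphisms $\phi^{(i)}_t \from U \to X$, defined for $|t| < \eps$, with $\phi^{(i)}_0 = \id$. Since being a singular point of $X$ is an intrinsic property of the analytic germ, $\phi^{(i)}_t$ preserves $\sing X$ set-theoretically. After shrinking $U$ so that $S$ is the only component of $\sing X$ meeting $U$, and shrinking $\eps$ correspondingly, one concludes $\phi^{(i)}_t(S \cap U) \subseteq S$. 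Since $S$ is smooth at $p$, the path $t \mapsto \phi^{(i)}_t(p)$ lies in $S$ and has velocity vector $v_i(p)$ at $t = 0$, which therefore lies in $T_p S \subseteq T_p X$.

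The remaining, and I expect main, obstacle is to show that $v_1(p), \dots, v_r(p)$ are linearly independent in $T_p X$, equivalently, that the evaluation map $\sF \otimes_{\O{X}} \C(p) \to T_p X$ is injective. This is far from automatic on singular spaces: on the $A_1$ surface singularity $\{xy = z^2\}$ the analogous map $\T{X} \otimes \C(p) \to T_p X$ already vanishes at the node, so the direct-summand hypothesis on $\sF$, together with the klt condition, must combine to rule out such degenerate behaviour. I would attack this via the dual decomposition $\Omegar{X}{1} = \sF\dual \oplus \sE\dual$, reducing the injectivity to surjectivity of the natural map $\frm_p / \frm_p^2 \to \sF\dual \otimes \C(p)$, and then appeal to a Kebekus--Schnell-style extension theorem for reflexive $1$-forms on a log resolution $\pi \from \tilde X \to X$ in order to control how the generators of $\sF\dual$ extend across the exceptional divisor and descend into $\frm_p / \frm_p^2$. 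Once this injectivity is established, the previous paragraph produces $r$ linearly independent vectors inside $T_p S$, contradicting $\dim T_p S \le r - 1$ and closing the induction.
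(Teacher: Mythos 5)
Your first half coincides with the paper's: the flows of the $v_i$ preserve $\sing X$, hence $v_i(z) \in T_z Z$ at a general smooth point $z$ of the singular locus, and the theorem would follow from the linear independence of $v_1(z), \dots, v_r(z)$ in $T_z X$. But that independence is not a technical lemma you can defer --- it carries the entire content of the theorem, and your induction on $r$ is idle (the inductive hypothesis is never used; the base case is vacuous). Your proposed route, surjectivity of $\frm_z/\frm_z^2 \to \sF\dual \otimes \C(z)$, runs into the problem that $\Omegap X1 \to \Omegar X1$ is in general \emph{not} surjective at singular points (its cokernel is supported on $\sing X$), so there is no reason the reflexive forms generating the direct summand $\sF\dual \subset \Omegar X1$ should be hit by K\"ahler differentials at $z$; Kebekus--Schnell extends reflexive forms \emph{up} to a resolution and does not address this descent. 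Most tellingly, your sketch never uses the klt hypothesis in any substantive way, and some such hypothesis is indispensable.

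The paper closes exactly this gap by a different mechanism, arguing from the assumed dependence rather than trying to prove surjectivity directly. A nontrivial relation $\sum_{i} \lambda_i v_i(z) = 0$ lets one change basis so that $v_1(z) = 0$; the lift $\wt v_1$ to the functorial resolution $f \from Y \to X$ is then tangent to the fibre $F = f\inv(z)$, hence restricts to a vector field on each smooth component $P$ of the snc variety $F$. Dually, the basis element $\alpha_1 \in \sF\dual$ with $\alpha_1(v_1) \equiv 1$ lifts by Kebekus--Schnell to a holomorphic $1$-form $\wt\alpha_1$ on $Y$, and $\wt\alpha_1(\wt v_1) \equiv 1$ forces $\wt\alpha_1\big|_P \ne 0$. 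But $F$ is rationally chain connected because $X$ is klt (Hacon--McKernan), so $\HH0.F.\Omegat F1. = 0$ by Kebekus and the restriction $\wt\alpha_1\big|_P$ must vanish --- a contradiction. This interplay (vanishing of a vector field at $z$ giving tangency to the resolution fibre, paired against a nonvanishing form on a rationally connected fibre) is the missing idea; without it, or a substitute of comparable strength, your argument does not close.
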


Easy examples show that the bound on $r$ in \cref{directlz} is sharp (\cref{sharp}).

\section{Notation and basic facts}

Unless otherwise stated, complex spaces are assumed to be countable at infinity, separated, reduced and connected.
Algebraic varieties and schemes are always assumed to be defined over the complex numbers.

\begin{dfn}[Torsion-free differentials]
Let $X$ be a reduced complex space and $p \in \N$ a non-negative integer.
The sheaf of \emph{torsion-free differential $p$-forms} on $X$ is defined to be
\[ \Omegat Xp \defn \factor{\Omegap Xp}{\tor \Omegap Xp}, \]
where $\Omegap Xp \defn \bigwedge^p \Omegap X1$ is the sheaf of \kahler differentials and $\tor \Omegap Xp$ is the subsheaf of $\Omegap Xp$ consisting of the sections vanishing on some dense open subset $U \subset X$.
Equivalently, $\tor \Omegap Xp$ consists of those sections whose support is contained in the singular locus $\sing X$.
\end{dfn}

\begin{dfn}[Quasi-\'etale covers]
A \emph{cover} is a finite, surjective morphism $\gamma \from Y \to X$ of normal, connected complex spaces.
A cover $\gamma$ is called \emph{\qe} if there exists a closed subset $Z \subset Y$ with $\codim{Y}{Z} \ge 2$ such that $\gamma\big|_{Y \setminus Z} \from Y \setminus Z \to X$ is \'etale.
\end{dfn}

\begin{dfn} \label{gkp}
Let $X$ be a normal complex space.
A \emph{\gkp cover} of $X$ is a \qe Galois cover $\gamma \from Y \to X$ satisfying the following equivalent conditions:
\begin{enumerate}
\item\label{gkp.1} Any \'etale cover of $\reg Y$ extends to an \'etale cover of $Y$.
\item\label{gkp.2} Any \qe cover of $Y$ is \'etale.
\item\label{gkp.3} The natural map of \'etale fundamental groups $\piet{\reg Y} \to \piet Y$ induced by the inclusion $\reg Y \inj Y$ is an isomorphism.
\end{enumerate}
\end{dfn}

\begin{dfn}[Irregularity]
The \emph{irregularity} of a compact complex space $X$ is $q(X) \defn \hh1.Y.\O Y.$, where $Y \to X$ is any resolution of singularities.
The \emph{augmented irregularity} of $X$ is
\[ \wt q(X) \defn \max \Big\{ q \big( \wt X \big) \;\Big|\; \wt X \to X \text{ quasi-\'etale} \Big\} \in \N_0 \cup \{ \infty \}. \]
\end{dfn}

\subsection*{Vector fields}

Let $X$ be a reduced complex space.
A \emph{vector field} on $X$ is a (local) section of the \emph{tangent sheaf} $\T X \defn \sHom \Omegap X1.\O X.$, where $\Omegap X1$ is the sheaf of \kahler differentials.

Let $v$ be a vector field.
For any point $x \in X$, the germ of $v$ at $x$ is a \C-linear derivation $v_x \from \O{X,x} \to \O{X,x}$ and hence it can be restricted to an element of the Zariski tangent space of $X$ at $x$:
\[ v(x) \in \left( \factor{\frm_x}{\frm_x^2} \right) \dual \ndef T_x X. \]
Note, however, that in general not every Zariski tangent vector at $x$ is of the form $v(x)$ for some local vector field $v$.

\begin{lem} \label{506}
Let $Z \subset X$ be an analytic subset that is fixed by every local automorphism of $X$.
Then $v(z) \in T_z Z \subset T_z X$ for every vector field $v$ on $X$ defined near $z \in Z$.
\end{lem}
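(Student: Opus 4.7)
The plan is to integrate $v$ to a local one-parameter family of biholomorphisms of $X$ and then use the hypothesis that $Z$ is preserved by every local automorphism of $X$.

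More precisely, I would first appeal to the classical theorem of Kaup on the integration of holomorphic vector fields on reduced complex spaces to produce an open neighborhood $U \subset X$ of $z$, an open disc $\Delta \subset \C$ around the origin, and a holomorphic map $\Phi \from \Delta \times U \to X$ with the following properties: for every $t \in \Delta$ the map $\Phi_t \defn \Phi(t, -) \from U \to \Phi_t(U)$ is a biholomorphism onto its image (and in particular a local automorphism of $X$), $\Phi_0 = \id_U$, and $\partial_t\big|_{t=0} \Phi_t = v\big|_U$ as sections of $\T U$.

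Next, the hypothesis on $Z$ implies $\Phi_t(Z \cap U) \subset Z$ for every $t \in \Delta$. Consequently, the holomorphic curve $\gamma \from \Delta \to X$ defined by $\gamma(t) \defn \Phi(t, z)$ factors through the analytic subset $Z$, so that its velocity vector $\gamma'(0) = v(z)$ lies in the Zariski tangent space $T_z Z \subset T_z X$, as required.

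I expect the only substantive obstacle to be the first step. On a singular complex space one cannot simply apply the Picard--Lindel\"of theorem in a local chart, since a section of $\T X$ need not extend to a holomorphic vector field on an ambient smooth model into which $X$ is locally embedded. One must instead rely on Kaup's intrinsic integration theorem, which constructs the local flow directly from the derivation induced by $v$ on $\O X$; once that is available, the rest of the argument reduces to a one-line differentiation along a curve in $Z$.
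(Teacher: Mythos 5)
Your proposal is correct and follows essentially the same route as the paper: the paper also integrates $v$ to a local $\C$-action $\Phi$ (citing Akhiezer's account of Kaup's theory of local flows on reduced complex spaces, precisely to address the obstacle you identify), uses the hypothesis to see that $\Phi$ preserves $Z$, and recovers $v(z) \in T_z Z$ by differentiating at $t = 0$. The only cosmetic difference is that the paper phrases the last step as the derivation stabilizing the ideal $I_{Z,z}$ and hence descending to $\O{Z,z}$, whereas you differentiate along the curve $t \mapsto \Phi(t,z)$ inside $Z$; these are the same computation.
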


\begin{proof}
We will use the correspondence between derivations, vector fields and local \C-actions as described in~\cite[\S 1.4, \S 1.5]{Akh95}.
The vector field $v$ induces a local \C-action $\Phi \from \C \x X \to X$.
By the definition of local group action, $\Phi(t, -)$ is an automorphism of germs $(X, z) \bij \big( X, \Phi(t, z) \big)$ for every sufficiently small $t \in \C$.
It then follows from the assumption that $\Phi(t, z) \in Z$ for every $t \in \C$.
Now, we can recover the derivation $\delta$ corresponding to $v$ from $\Phi$ by the formula
\begin{equation} \label{kaup}
\delta(f)(x) = \frac\d{\d t}\bigg|_{t=0} f \big( \Phi(t, x) \big)
\end{equation}
for every $f \in \O{X,z}$.
Plugging the above statement into~\lref{kaup}, we see that $\delta$ stabilizes the ideal of $Z$, i.e.~$\delta(I_{Z,z}) \subset I_{Z,z}$.
Hence $\delta$ induces a derivation of $\O{Z,z}$ and then also an element of $T_z Z$.
\end{proof}

\begin{exm}
Let $X = \C^2$ and let $0 \in C \subset X$ be a curve such that $0$ is a singular point of $C$.
If $v$ is a vector field on $X$ that is tangent to $C \setminus \set0$, then its local flows restrict to automorphisms of $C$.
These automorphisms necessarily fix the singular point $0 \in \sing C$ and hence $v$ vanishes at the origin, $v(0) = 0$.
\end{exm}

\subsection*{Deformation theory}

This is just a very quick reminder.
For more details, see for example~\cite{Kodairaflat}.

\begin{dfn}[Deformations of complex spaces]
A \emph{deformation} of a (reduced) compact complex space $X$ is a proper flat morphism $\pi \from \frX \to (S, 0)$ from a (not necessarily reduced) complex space $\frX$ to a complex space germ $(S, 0)$, equipped with a fixed isomorphism $\frX_0 \defn \pi\inv(0) \isom X$.
\end{dfn}

\begin{dfn}[Algebraic approximations] \label{def alg approx}
Let $X$ be a compact complex space and $\pi \from \frX \to S$ a deformation of $X$.
Consider the set of projective fibres
\[ S^{\mathrm{alg}} \defn \big\{ s \in S \;\big|\; \frX_s \text{ is projective} \big\} \subset S \]
and its closure $\overline{S^{\mathrm{alg}}} \subset S$.
We say that $\frX \to S$ is an \emph{algebraic approximation of $X$} if $0 \in \overline{S^{\mathrm{alg}}}$.
We say that $\frX \to S$ is a \emph{strong algebraic approximation of $X$} if $\overline{S^{\mathrm{alg}}} = S$ as germs, i.e.~$S^{\mathrm{alg}}$ is dense near $0 \in S$.
\end{dfn}

\begin{dfn}[Locally trivial deformations] \label{dfn lt def}
A deformation $\pi \from \frX \to S$ is called \emph{\lt} if for every $x \in \frX_0$ there exist open subsets $0 \in S^\circ \subset S$ and $x \in U \subset \pi\inv(S^\circ)$ and an isomorphism
\[ \xymatrix{
U \ar^-\sim[rr] \ar_-\pi[dr] & & (\frX_0 \cap U) \x S^\circ \ar^-{\operatorname{pr}_2}[dl] \\
& S^\circ. &
} \]
\end{dfn}

\section{The \lz for direct summands}

In this section, we prove \cref{directlz} from the introduction and the following corollary.
We then deduce \cref{main bound}.

\begin{cor}[Spaces with large flat summands] \label{directlz-cor}
Let $X$ be a normal complex space with klt singularities such that
\[ \T X = \sE \oplus \sF, \]
where $\sF\big|_{\reg X}$ is flat of rank $r$, i.e.~given by a representation $\pi_1(\reg X) \to \GL r\C$.
If $\dim \sing X \le r - 1$, then $X$ has only quotient singularities.
\end{cor}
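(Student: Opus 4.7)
The plan is to reduce Corollary~\ref{directlz-cor} to Theorem~\ref{directlz} by finding a \qe cover $\pi \from \wt X \to X$ such that the reflexive pullback $\wt\sF$ of $\sF$ is genuinely locally free on $\wt X$. Once this is achieved, Theorem~\ref{directlz} forces $\wt X$ to be smooth, and since $\pi$ is a finite Galois cover, $X = \wt X/G$ then has only quotient singularities by construction.

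The natural candidate for $\pi$ is a \gkp cover (which exists in the klt setting by Greb--Kebekus--Peternell). Such a $\pi$ is \qe, so $\wt X$ is again klt. Purity of the branch locus, applied to the finite map $\pi\inv(\reg X) \to \reg X$ from a normal source to a regular target, forces the ramification locus of $\pi$ (which has codimension $\ge 2$ in $\wt X$ by the \qe property) to be disjoint from $\pi\inv(\reg X)$; hence $\pi$ is \'etale over $\reg X$, which gives $\sing{\wt X} \subset \pi\inv(\sing X)$ and therefore $\dim \sing{\wt X} \le r - 1$. The decomposition $\T X = \sE \oplus \sF$ pulls back via reflexive pullback to a splitting $\T{\wt X} = \wt\sE \oplus \wt\sF$, and $\wt\sF\big|_{\reg{\wt X}}$ remains flat of rank $r$.

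The key remaining step---and the main obstacle---is to establish that $\wt\sF$ is locally free on all of $\wt X$. For this I would invoke an extension theorem for flat reflexive sheaves on klt \gkp spaces: combining the \gkp property of $\wt X$ with Braun's finiteness theorem for local fundamental groups at klt singularities, one shows that near any $\wt x \in \sing{\wt X}$ the monodromy of $\wt\sF\big|_{\reg{\wt X}}$ becomes trivial on a sufficiently small punctured neighborhood of $\wt x$, forcing $\wt\sF$ to be locally free (indeed trivial) there. With $\wt\sF$ thus locally free on all of $\wt X$, Theorem~\ref{directlz} applies and yields the smoothness of $\wt X$, completing the proof. The subtlety to keep in mind is that the global representation defining the flat structure on $\sF\big|_{\reg X}$ may have infinite image, so it is only the \emph{local} monodromy that one can expect to become finite (via Braun) and then trivial (via \gkp); the global representation itself remains unconstrained.
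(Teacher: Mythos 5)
Your overall strategy --- pass to a \gkp cover, upgrade the flat summand $\sF$ to a genuinely locally free sheaf there, and feed the result into \cref{directlz} --- is the same one the paper uses, but it hinges on a step that is not available in this generality: the \emph{existence} of a global \gkp cover of $X$. The Greb--Kebekus--Peternell existence theorem is proved for quasi-projective varieties; its proof rests on finiteness statements for local algebraic fundamental groups of klt singularities that are only known in the (locally) algebraic setting. The same caveat applies to your appeal to Braun's theorem. Here $X$ is an arbitrary normal complex space with klt singularities, and local algebraicity of $X$ is precisely one of the things the paper is trying to \emph{prove} (\cref{kod}); indeed, the paper only invokes a global \gkp cover later, in the proof of \cref{pi1}, \emph{after} local algebraicity has been established. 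So the first sentence of your reduction already assumes something unavailable.

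The paper circumvents this by an induction on $\dim \sing X$. By \cite[Prop.~5.8]{BochnerCGGN} there is an open subset $X^\circ \supset \reg X$ with $\dim(X \setminus X^\circ) \le \dim \sing X - 1$ that \emph{does} admit a \gkp cover $Y^\circ \to X^\circ$; this is extended to a \qe cover $Y \to X$ via Dethloff--Grauert. On $Y^\circ$ the flat summand is shown to be locally free using property~\lref{gkp.3} together with Platonov's theorem (the Grothendieck--Platonov argument that the representation of $\pi_1(\reg{Y^\circ})$ factors through $\pi_1(Y^\circ)$ once the profinite completions agree). Note that finiteness of the local monodromy --- which is all Braun would give you even where he applies --- is not by itself enough: a nontrivial finite local monodromy produces a reflexive extension that is a quotient-singularity pushforward, not a locally free sheaf; one really needs the triviality supplied by the Platonov/residual-finiteness argument. \cref{directlz} then makes $Y^\circ$ smooth, whence $\dim \sing Y \le \dim(X \setminus X^\circ) \le \dim \sing X - 1$, and the induction hypothesis applied to $Y$ yields that $Y$, and hence $X$, has only quotient singularities. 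Your write-up is missing both the induction and a workable substitute for the global \gkp cover.
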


\begin{rem}[Sharpness of \cref{directlz}] \label{sharp}
The bound on $\dim \sing X$ in \cref{directlz} is sharp, as shown by the (easy) example $X = Y \x \C$, where $Y$ is a (non-smooth) isolated klt singularity.
In this case $\T X$ has a rank one free summand and $\dim \sing X = 1$, but $X$ is not smooth.
\end{rem}

\begin{rem}[Reformulation of \cref{directlz}]
The conclusion of \cref{directlz} could also be formulated in a somewhat oblique manner as follows: every irreducible component of $\sing X$ has dimension at least $r$.
\end{rem}

\begin{rem}[Comparison to previous results]
The ``usual'' \lz is well-known for spaces with klt, or even log canonical, singularities~\cite[Cor.~1.3]{GK13}.
If $r = n \defn \dim X$ in \cref{directlz}, the statement reduces to this result.

In general, $\dim \sing X = n - 2$ and then the only case left where \cref{directlz} applies is $r = n - 1$.
In this case, we may consider a (local) index one cover $X_1 \to X$.
There, also the rank one sheaf $\sE$ will become locally free, hence $X_1$ is smooth.
Summing up, we see that~\cite{GK13} only yields the weaker statement that $X$ has quotient singularities (instead of being smooth).
\end{rem}

\subsection{Proof of \cref{directlz}}

Assuming that $Z \defn \sing X$ is non-empty, we will derive a contradiction.
Let $f \from Y \to X$ be the functorial resolution.
Pick a sufficiently general point $z \in Z$.
Then $Z$ is smooth at $z$, i.e.~$z \in \reg Z$, and by Generic Smoothness the fibre $F \defn f\inv(z) \subset Y$ will be a simple normal crossings variety (albeit not necessarily a divisor in $Y$).
Shrinking $X$ around $z$, we may without loss of generality make the following
\begin{awlog}
The singular locus $Z$ of $X$ is smooth.
The sheaf $\sF$ is free, isomorphic to $\O X^{\oplus r}$.
\end{awlog}
Let $\set{v_1, \dots, v_r}$ be a basis of $\sF$ and let $\set{\alpha_1, \dots, \alpha_r}$ be the dual basis of $\sF \dual$, defined by $\alpha_i(v_j) = \delta_{ij}$.
Since $\sF \subset \T X$ is a direct summand, so is $\sF \dual \subset \Omegar X1$.
This enables us to consider the sections $\alpha_i$ as reflexive $1$-forms on $X$.
Evaluating the vector fields $v_i$ at the point $z$ and taking into account that $Z$ is stabilized by their flows, we obtain $v_i(z) \in T_z Z$.
Since $\dim T_z Z = \dim Z \le r - 1$, there is a non-trivial relation
\begin{equation} \label{linabh}
\phantom{\lambda_i \in \C \qquad} \sum_{i=1}^r \lambda_i v_i(z) = 0 \in T_z Z \subset T_z X, \qquad \lambda_i \in \C.
\end{equation}
Some coefficient in~\lref{linabh}, say $\lambda_1$, will be non-zero.
Replacing $v_1$ by $\sum_{i=1}^r \lambda_i v_i$, we arrive at the
\begin{awlog}
The free sheaf $\sF$ has a basis $\set{v_1, \dots, v_r}$ with the property that $v_1(z) = 0$.
\end{awlog}
This means that $z$ is stabilized by the flow of $v_1$.
Let $\wt v_1$ be the lift of $v_1$ to $Y$.
Then the flow of $\wt v_1$ stabilizes the fibre $F = f\inv(z)$, i.e.~$\wt v_1$ restricts to a vector field on $F$.
The same is then true of any irreducible component of $F$.
Fix one such component $P \subset F$, and note that $P$ is smooth because $F$ is an snc variety.

On the other hand, let $\wt \alpha_1$ be the lift of $\alpha_1$ to $Y$, which is a holomorphic $1$-form by~\cite{KebekusSchnell18}.
Since the function $\wt \alpha_1(\wt v_1)$ is identically one, the restricted form $\wt \alpha_1 \big|_P$ cannot be zero.
Indeed, if $p \in P$ is arbitrary then $\wt \alpha_1$ evaluated on $\wt v_1(p) \in T_p P$ is non-zero.
On the other hand, the restriction map factors as
\begin{equation} \label{restr factor}
\HH0.Y.\Omegap Y1. \lto \underbrace{\HH0.F.\Omegat F1.}_{= 0} \lto \HH0.P.\Omegap P1.
\end{equation}
because $P$ is smooth and not contained in the singular locus of $F$.
The middle term vanishes due to~\cite[Cor.~1.5]{HaconMcKernan07} and \cite[Thm.~4.1]{Kebekus13}.
Hence~\lref{restr factor} implies $\wt \alpha_1\big|_P = 0$, contradicting our previous observation that this form is non-zero and thus ending the proof. \qed

\subsection{Proof of \cref{directlz-cor}}

Since the problem is local, we may assume $X$ to be a germ.
We argue by induction on $\dim \sing X$.
If $\sing X = \emptyset$, there is nothing to show.
Otherwise, by~\cite[Prop.~5.8]{BochnerCGGN} there exists an open subset $\reg X \subset X^\circ \subset X$ admitting a \gkp cover $\gamma^\circ \from Y^\circ \to X^\circ$ and satisfying $\dim(X \setminus X^\circ) \le {\dim \sing X - 1}$.
We may extend $\gamma^\circ$ to a \qe cover $\gamma \from Y \to X$, by~\cite[Thm.~3.4]{DethloffGrauert94}.
\[ \xymatrix{
Y^\circ \ar@{ ir->}[rr] \ar_-{\gamma^\circ}[d] & & Y \ar^-\gamma[d] \\
X^\circ \ar@{ ir->}[rr] & & X
} \]
Note that $Y$ reproduces all assumptions of \cref{directlz-cor}.
In particular, taking the reflexive pullback of the decomposition $\T X = \sE \oplus \sF$, we get
\[ \T Y = \sE_Y \oplus \sF_Y \]
where $\sF_Y\big|_{\reg Y}$ is flat.
Restricting $\sF_Y$ to $\reg Y^\circ$ and using property~\lref{gkp.3} combined with~\cite{Platonov68}, it follows that $\sF_Y\big|_{Y^\circ}$ is flat, in particular locally free.
Thus $Y^\circ$ is smooth thanks to \cref{directlz}.
This implies
\[ \dim \sing Y \le \dim(Y \setminus Y^\circ) = \dim(X \setminus X^\circ) \le \dim \sing X - 1. \]
By the induction hypothesis, $Y$ has only quotient singularities and hence so does~$X$. \qed

\subsection{Proof of \cref{main bound}}

Assume that $\rk(\sF) \ge \dim \sing X + 1$.
By \cref{directlz-cor}, the space $X$ has only quotient singularities.
In particular, it is locally algebraic and hence $\rk(\sF) = \wt q(X) = 0$ by~\cite[Thm.~C]{BochnerCGGN}.
This contradiction proves~\lref{mb.1}.

For~\lref{mb.2}, we only need to exclude the case that $\rk(\sF) = 1$.
In this case, $\sF\big|_{\reg X}$ is given by a representation $\pi_1(\reg X) \to \mathrm{SU}(1) = \set 1$, so it is the trivial line bundle.
By reflexivity, this implies $\sF \isom \O X$.
In particular, $\HH0.X.\sF \dual. \ne 0$.
Since $\sF \dual \subset \Omegar X1$, this contradicts the assumption that $\wt q(X) = 0$. \qed

\section{Proof of main results}

\subsection{Torus covers revisited}

In the standard setting~\lref{std}, recall the notion of \emph{torus cover} from~\cite[Thm.~B]{BochnerCGGN}:
this is a \qe cover $\gamma \from T \x Z \to X$, where $T$ is a complex torus of dimension $\wt q(X)$, while $Z$ satisfies $\can Z \isom \O Z$ as well as $\wt q(Z) = 0$.
We do not know if the map $\gamma$ can always be chosen to be \qe.
Indeed, this is not obvious from the construction and taking the Galois closure of a given $\gamma$ might destroy the splitting property.
The following weaker statement is however sufficient for our purposes:

\begin{prp}[Torus covers revisited] \label{tcr}
In the standard setting~\lref{std}, the torus cover can be chosen in such a way that it is a composition of \qe Galois morphisms:
\[ T \x Z \xrightarrow{\quad\textup{Galois}\quad} X' \xrightarrow{\quad\textup{Galois}\quad} X. \]
\end{prp}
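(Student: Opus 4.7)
The plan is to revisit the construction of the torus cover in~\cite[Thm.~B]{BochnerCGGN} and to observe that it can be performed as a composition of two independently \qe Galois morphisms.

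In the first stage, one passes to a \qe cover $\pi_1 \from X' \to X$ on which the canonical sheaf is trivial and the augmented irregularity is realised by the ordinary one, i.e.~$\omega_{X'} \isom \O{X'}$ and $q(X') = \wt q(X') = \wt q(X)$. Such a cover exists by combining an index-one cover with a cover that realises $\wt q(X)$; both properties are preserved when we subsequently take a Galois closure, and since Galois closures of \qe morphisms are again \qe, we may arrange $\pi_1$ itself to be Galois.

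In the second stage, one considers the Albanese morphism $\alpha \from X' \to A \defn \Alb(X')$. The core output of~\cite[Thm.~B]{BochnerCGGN} asserts that there exists an isogeny of complex tori $\tau \from T \to A$ such that the fibre product $X' \x_A T$ splits as a product $T \x Z$ with $\omega_Z \isom \O Z$ and $\wt q(Z) = 0$. Every isogeny of complex tori is a finite \'etale Galois cover (with Galois group $\ker \tau$), and this property is preserved by base change; hence the second-stage morphism $\pi_2 \from T \x Z = X' \x_A T \to X'$ is \'etale Galois, and in particular \qe Galois.

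Composing the two stages yields the desired factorisation $T \x Z \xrightarrow{\pi_2} X' \xrightarrow{\pi_1} X$, in which both arrows are \qe Galois. The main (and essentially only) point that needs to be verified is that the construction in~\cite[Thm.~B]{BochnerCGGN} does decompose into these two independent stages, i.e.~that once one has a cover $X'$ satisfying $\omega_{X'} \isom \O{X'}$ and $q(X') = \wt q(X')$, no further, potentially non-Galois, \qe cover is required before the Albanese splitting can be applied. I expect this to be the hard part, but it should follow directly from inspection of the argument in~\cite{BochnerCGGN}, since the splitting criterion there depends only on the two invariants $\omega_{X'}$ and $q(X') = \wt q(X')$, both of which are stable under further \qe Galois covers.
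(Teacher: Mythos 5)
Your proposal is correct and follows essentially the same two-stage route as the paper: the first arrow is the Galois closure of (index-one cover) composed with (a \qe cover realising $\wt q(X)$), and the second arrow is an \'etale base change of the Albanese map $X' \to A$ along a Galois cover of the torus $A$. The only cosmetic difference is that the paper passes to the Galois closure $A_2 \to A$ of the \'etale cover trivialising the Albanese fibration, whereas you invoke that a connected finite \'etale cover of a complex torus is an isogeny and hence already Galois with group its kernel --- both work, and the worry in your final paragraph is unfounded, since the construction in~\cite[Thm.~4.1]{BochnerCGGN} does decompose into exactly these two stages.
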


\begin{proof}
Consider the index one cover $X_1 \to X$ and choose a \qe cover $X_2 \to X_1$ with $q(X_2) = \wt q(X)$.
Replacing $X_2 \to X$ by its Galois closure $X' \to X$ yields the first map in the statement to be proven.

For the second map, we know from~\cite[Thm.~4.1]{BochnerCGGN} that the Albanese map $X' \to A \defn \Alb(X')$ becomes trivial after a finite \'etale base change $A_1 \to A$.
We may consider the Galois closure $A_2 \to A$ of the latter map.
The pullback of $X'$ to $A_2$ still splits, by the transitivity of fibre products.
\[ \xymatrix{
F \x A_2 \ar[rr] \ar[d] & & F \x A_1 \ar[rr] \ar[d] & & X' \ar[d] \\
A_2 \ar[rr] & & A_1 \ar[rr] & & A.
} \]
Furthermore, $F \x A_2 \to X'$ is Galois, being the pullback of the Galois morphism $A_2 \to A$ along $X' \to A$.
We now set $T \defn A_2$ and $Z \defn F$.
The proof that this is indeed a torus cover of $X$ is the same as in~\cite[proof of Cor.~4.2]{BochnerCGGN}.
\end{proof}

\begin{lem} \label{def tc}
Notation as above.
\begin{enumerate}
\item\label{def tc.1} The tangent space $T_0 \Deflt(T \x Z)$ can be calculated as follows:
\[ \HH1.T \x Z.\T{T \x Z}. = \HH1.T.\T T. \oplus \HH1.Z.\T Z.. \]
\item\label{def tc.2} If $\Deflt(Z)$ is smooth, then so is $\Deflt(T \x Z)$.
More precisely, in this case $\Deflt(T \x Z) = \Deflt(T) \x \Deflt(Z)$.
\end{enumerate}
\end{lem}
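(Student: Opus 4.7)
For part~\lref{def tc.1}, I would start from the decomposition $\T{T \x Z} = p_T^* \T T \oplus p_Z^* \T Z$, valid as reflexive sheaves since $T$ is smooth (so in particular $p_Z$ is smooth). Because $T$ is a torus, $\T T$ is trivial of rank $n_T \defn \dim T$, whence the first summand is the free sheaf $\O{T \x Z}^{\oplus n_T}$. K\"{u}nneth for the structure sheaf together with the vanishing $\HH1.Z.\O Z. = 0$ (klt implies rational, and $\wt q(Z) = 0$ in the torus-cover setup forces $q(Z) = 0$) then yields $\HH1.{T \x Z}.{p_T^* \T T}. = \HH1.T.\T T.$. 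For the second summand, K\"{u}nneth applied to the external product $\O T \boxtimes \T Z$ gives
\[
\HH1.{T \x Z}.{p_Z^* \T Z}. = \HH1.Z.\T Z. \oplus \bigl(\HH1.T.\O T. \tensor \HH0.Z.\T Z.\bigr),
\]
so the claim reduces to the vanishing $\HH0.Z.\T Z. = 0$. I would derive this from the Bochner principle of~\cite{BochnerCGGN}: every global vector field on $Z$ must be parallel with respect to a singular Ricci-flat metric on $\reg Z$, and a non-zero parallel vector field would produce a non-trivial flat summand in $\T Z$, hence a torus factor in a \qe cover of $Z$, contradicting $\wt q(Z) = 0$.

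For part~\lref{def tc.2}, the plan is to exploit the natural morphism of deformation functors
\[
\Deflt(T) \x \Deflt(Z) \lto \Deflt(T \x Z)
\]
sending a pair of lt deformations to their fibre product over the base. Since $T$ is a complex torus, $\Deflt(T) = \Def(T)$ has a smooth hull (deformations of tori are classically unobstructed), and by hypothesis $\Deflt(Z)$ is smooth, so the source is smooth. By part~\lref{def tc.1} this morphism induces an isomorphism on Zariski tangent spaces. Dualising, we obtain a local $\C$-algebra homomorphism from the hull of $\Deflt(T \x Z)$ to the regular local ring pro-representing $\Deflt(T) \x \Deflt(Z)$ that is bijective on $\frm/\frm^2$; an elementary power-series computation then shows that under these conditions the source ring is itself regular and the map is an isomorphism. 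Hence $\Deflt(T \x Z) \isom \Deflt(T) \x \Deflt(Z)$ and, in particular, $\Deflt(T \x Z)$ is smooth.

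The principal obstacle is the vanishing $\HH0.Z.\T Z. = 0$ in part~\lref{def tc.1}: it requires a singular Bochner principle combined with the holonomy-decomposition structure to rule out a non-trivial flat summand on $Z$. The remaining steps are formal K\"{u}nneth manipulations and a short piece of elementary deformation theory for smooth formal schemes.
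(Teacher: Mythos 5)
Your overall architecture coincides with the paper's: both arguments decompose $\T{T \x Z}$ as the direct sum of the two (reflexive, because the projections are flat) pullbacks $p_T^*\T T \oplus p_Z^*\T Z$, reduce part~\lref{def tc.1} via K\"unneth/Leray to the single vanishing $\HH0.Z.\T Z. = 0$, and deduce part~\lref{def tc.2} by comparing the product family over $\Deflt(T) \x \Deflt(Z)$ with the semiuniversal one through the tangent-space isomorphism. Your part~\lref{def tc.2} is essentially the paper's proof (modulo the phrasing ``pro-representing'': these functors only admit hulls, but the map from the smooth product germ to the hull exists by semiuniversality and the formal inverse function theorem argument goes through unchanged).

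The genuine gap is in your derivation of $\HH0.Z.\T Z. = 0$, which you rightly single out as the crux. The implication ``non-trivial flat summand in $\T Z$ $\Rightarrow$ torus factor in a \qe cover of $Z$ $\Rightarrow$ $\wt q(Z) > 0$'' is exactly the flat-factor conjecture discussed in the introduction: in the K\"ahler category it is known only through \cite{BakkerGuenanciaLehn20}, i.e.\ through the full decomposition theorem, which this paper is explicitly constructed to avoid; within the paper the decomposition is a \emph{consequence} of the results this lemma feeds into, so the argument would be circular, and the paper's partial substitute \cref{main bound} only applies under dimension restrictions on $\sing Z$ that are not assumed in \cref{def tc}. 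Your idea is easily repaired without invoking any torus factor: the global (parallel) vector fields span a parallel subbundle on which the holonomy acts trivially, hence a \emph{trivial} reflexive direct summand $\O Z^{\oplus k} \subset \T Z$; dualizing gives a direct summand $\O Z^{\oplus k} \subset \Omegar Z1$, and by the extension theorem \cite{KebekusSchnell18} and Hodge theory on a resolution this forces $q(Z) \ge k > 0$, contradicting $\wt q(Z) = 0$. Alternatively --- and this is what the paper does, avoiding the Bochner principle altogether --- contraction with the trivializing section of $\can Z$ identifies $\HH0.Z.\T Z.$ with $\HH0.Z.\Omegar Z{m-1}.$, which equals $h^{m-1}\big(\wt Z, \O{\wt Z}\big) = h^1(Z, \O Z) = q(Z) = 0$ by \cite{KebekusSchnell18}, Hodge symmetry, rational singularities and Serre duality.
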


\begin{proof}
Let $p \from T \x Z \to T$ and $r \from T \x Z \to Z$ be the projections.
The proof is in a series of claims.

\begin{clm} \label{T tc}
The tangent sheaf of $T \x Z$ decomposes as
\[ \T{T \x Z} = p^* \T T \oplus r^* \T Z. \]
\end{clm}

\begin{proof}
Clearly the decomposition exists on the smooth locus of $T \x Z$.
By reflexivity, it extends to a decomposition $\T{T \x Z} = p^{[*]} \T T \oplus r^{[*]} \T Z$, where $p^{[*]} \T T \defn (p^* \T T)\ddual$ and $r^{[*]} \T Z$ denote the reflexive pullback.
Hence it suffices to show that $p^* \T T$ and $r^* \T Z$ are already reflexive.
For $p^* \T T$, this is obvious because $\T T$ is even (locally) free.

For $r^* \T Z$, we use the characterization of reflexive sheaves as locally 2\textsuperscript{nd} syzygy sheaves, \cite[Ch.~I, Lemma~1.1.16 and proof of Lemma~1.1.10]{OSS80}\footnote{The cited reference assumes the underlying space to be smooth, but the arguments work verbatim for sheaves on normal complex spaces.}.
That is, on sufficiently small open sets $U \subset Z$ there exists an exact sequence
\[ 0 \lto \T Z\big|_U \lto \O U^{\oplus n} \lto \O U^{\oplus m}. \]
The sequence stays exact when pulled back along the flat morphism $r$, showing that also $r^* \T Z\big|_{T \x U}$ is reflexive.
Of course, this argument shows quite generally that the pullback of a reflexive sheaf via a flat map remains reflexive.
\end{proof}

\begin{clm} \label{proj formula}
For any $i \ge 0$, we have
\[ \RR i.p.p^* \T T. = \T T \tensor \RR i.p.\O{T \x Z}. \]
and
\[ \RR i.r.r^* \T Z. = \T Z \tensor \RR i.r.\O{T \x Z}.. \]
\end{clm}

\begin{proof}
For $\T T$, this is simply the projection formula.
Regarding $\T Z$, some care is required because that sheaf is not locally free.
So let $U \subset Z$ be a Stein open subset.
Then on the level of presheaves, we need to check that
\begin{equation} \label{827}
\HH i.r\inv(U).r^* \T Z. = \HH0.U.\T Z. \tensor \HH i.r\inv(U).\O{T \x Z}.,
\end{equation}
where the tensor product is taken over the ring $\HH0.U.\O Z.$.
But $r\inv(U) = T \x U$, so the K\"unneth formula tells us that the left-hand side equals
\begin{equation} \label{832}
\bigoplus_{k + \ell = i} \HH k.U.\T Z. \tensor \HH\ell.T.\O T.
= \HH0.U.\T Z. \tensor \HH i.T.\O T.
\end{equation}
since the higher cohomology groups on $U$ vanish.
For the same reason, the second factor on the right-hand side is
\begin{equation} \label{836}
\bigoplus_{k + \ell = i} \HH k.U.\O Z. \tensor \HH\ell.T.\O T.
= \HH0.U.\O Z. \tensor \HH i.T.\O T.
= \HH i.T.\O T..
\end{equation}
\lref{827} now follows by comparing~\lref{832} and~\lref{836}.
\end{proof}

By \cref{T tc}, statement~\lref{def tc.1} is reduced to the following claim.

\begin{clm} \label{coh tc}
$\HH1.T \x Z.p^* \T T. = \HH1.T.\T T.$ and $\HH1.T \x Z.r^* \T Z. = \HH1.Z.\T Z.$.
\end{clm}

\begin{proof}
Concerning the first factor, the Leray spectral sequence combined with \cref{proj formula} gives
\[ 0 \lto \HH1.T.\T T. \lto \HH1.T \x Z.p^* \T T. \lto \HH0.T.{\T T \tensor \RR1.p.\O{T \x Z}.}., \]
where the last term vanishes because $q(Z) = 0$.
We obtain that $\HH1.T \x Z.p^* \T T. = \HH1.T.\T T.$.

For the second factor, again by \cref{proj formula} we have a similar exact sequence
\[ 0 \lto \HH1.Z.\T Z. \lto \HH1.T \x Z.r^* \T Z. \lto \HH0.Z.{\T Z \tensor \RR1.r.\O{T \x Z}.}. \]
but here $\RR1.r.\O{T \x Z}.$ is a trivial vector bundle (of rank equal to $\dim T$).
To conclude as before, we therefore need to know that $\HH0.Z.\T Z. = 0$.
This can be seen as follows, where $m = \dim Z$ and $\wt Z \to Z$ is a resolution:
\begin{align*}
\hh0.Z.\T Z. & = \hh0.Z.\Omegar Z{m - 1}. && \text{contraction and $\can Z \isom \O Z$} \\
& = \hh0.\wt Z.\Omegap{\wt Z}{m - 1}. && \text{\cite[Cor.~1.8]{KebekusSchnell18}} \\
& = \hh m - 1.\wt Z. \O{\wt Z}. && \text{Hodge theory on $\wt Z$} \\
& = \hh m - 1.Z.\O Z. && \text{$Z$ has rational singularities} \\
& = \hh1.Z.\O Z. && \text{Serre duality~\cite[Ch.~VII, Thm.~3.10]{BS76}} \\
& = 0 && \text{because $q(Z) = \wt q(Z) = 0$.}
\end{align*}
This ends the proof of \cref{coh tc}.
\end{proof}

For~\lref{def tc.2}, assume that $\Deflt(Z)$ is smooth.
Recall that $\Deflt(T)$ is smooth in any case because $T$ is a complex torus.
Hence also $B \defn \Deflt(T) \x \Deflt(Z)$ is smooth.
Consider the product deformation of $T \x Z$ over $B$, i.e.~the fibre over a point $(t, s) \in B$ is $T_t \x Z_s$.
The Kodaira--Spencer map of this deformation,
\[ \kappa \from T_0 B \lto \HH1.T \x Z.\T{T \x Z}., \]
is an isomorphism by~\lref{def tc.1}.
In particular, it is surjective and this implies that $\Deflt(T \x Z)$ is smooth.
Furthermore, our deformation over $B$ is pulled back from the semiuniversal deformation via a map
\[ B \lto \Deflt(T \x Z), \]
which on tangent spaces induces the isomorphism $\kappa$.
Hence the map itself is likewise an isomorphism.
This ends the proof.
\end{proof}

\subsection{Proof of \cref{btt}}

Keeping notation, note that $\dim Z = \dim X - \wt q(X)$, so that we may assume $\dim Z \ne 3$ for the purpose of this proof.
Note also that smoothness of $\Deflt(Z)$ implies smoothness of $\Deflt(X)$ by \cref{def tc}, \cref{tcr} and~\cite[Prop.~5.3]{Kodairaflat}.
We may therefore replace $X$ by $Z$ and then we only need to show the following: in the standard setting~\lref{std}, if $\wt q(X) = 0$ and $\dim X \in \set{1, 2, 4}$ then $\Deflt(X)$ is smooth.

If $\dim X \le 2$, then $X$ has only quotient singularities and hence the claim follows directly from~\cite[Cor.~1.7]{Kodairaflat}.
If $\dim X = 4$, then we apply \cref{main bound} to conclude that the flat factor of $\T X$ vanishes, $\sF = 0$.
(Note that here we have implicitly replaced $X$ by a cover, but this is harmless by~\cite[Prop.~5.3]{Kodairaflat} again because that cover can be taken to be Galois.)
Consequently, there are only two possibilities for the holonomy decomposition of $\T X$.
In slightly abused notation, they are:
\begin{itemize}
\item $\mathrm{Sp}(2)$, the strongly stable case, and
\item $\mathrm{SU}(2) \oplus \mathrm{SU}(2)$, the non-stable case.
\end{itemize}
Since $\mathrm{SU}(2) = \mathrm{Sp}(1)$, in both cases $X$ carries a holomorphic symplectic form.
This implies smoothness of $\Deflt(X)$ by~\cite[Thm.~4.7]{BakkerLehn18}, cf.~also the footnote in the proof of~\cite[Thm.~8.4]{BochnerCGGN}. \qed

\subsection{Proof of \cref{kod}}

Again let $T \x Z \to X$ be a torus cover.
If $\dim Z \ne 3$, then $\Deflt(X)$ is smooth by \cref{btt}.
In this case, \cref{kod} follows directly from~\cite[Thm.~1.2]{Kodairaflat}.
We may therefore assume that $\dim Z = 3$.
In this case, $\hh2.Z.\O Z. = \hh1.Z.\O Z. = q(Z) = 0$ by Serre duality.
Since also $\hh2.T.\O T. = 0$ for dimension reasons, we conclude from the K\"unneth formula that $\HH2.T \x Z.\O{T \x Z}. = 0$.
This in turn implies $\HH2.X.\O X. = 0$.
This Hodge number is constant in \lt families, as one can see e.g.~by performing a simultaneous resolution and using that $X$ has rational singularities.
That is, every \lt deformation $X_t$ of $X$ still satisfies $\HH2.X_t.\O{X_t}. = 0$.
By the Kodaira embedding theorem, $X_t$ is projective for any $t$ (including $X_t = X$).
In particular, any \lt deformation of $X$ is a strong algebraic approximation. \qed

\subsection{Proof of Corollaries~\lref{pi1} and~\lref{bb}}

The projectivity of $\pi_1(X)$ follows from Thom's First Isotopy Lemma, which in our situation says that $X$ and $X_t$ are homeo\-morphic.
For more details, cf.~the proof of~\cite[Cor.~1.8]{AlgApprox}.
Thus~\lref{pi1.1} is proved.

Regarding~\lref{pi1.2}, we know from \cref{kod} that $X$ is locally algebraic and hence it admits a \gkp cover $\gamma \from Y \to X$ by~\cite[Prop.~5.9]{BochnerCGGN}.
By~\lref{gkp.3}, the map $\piet{\reg Y} \to \piet Y$ is an isomorphism.
Also, $\piet{\reg Y} = \piet{\gamma\inv(\reg X)} \to \piet{\reg X}$ is injective with image of finite index.
It is therefore sufficient to prove the claim for $\piet{\reg Y}$.
But this reduces to~\cite[Thm.~G]{BochnerCGGN} applied to $Y$.

We already remarked above that we now know that $X$ is locally algebraic.
Therefore \cref{bb} follows immediately from~\cite[Thm.~H]{BochnerCGGN}. \qed

\newcommand{\etalchar}[1]{$^{#1}$}
\providecommand{\bysame}{\leavevmode\hbox to3em{\hrulefill}\thinspace}
\providecommand{\MR}{\relax\ifhmode\unskip\space\fi MR}
\providecommand{\MRhref}[2]{%
  \href{http://www.ams.org/mathscinet-getitem?mr=#1}{#2}
}
\providecommand{\href}[2]{#2}

\end{document}